\documentclass[11pt,a4paper]{article}
\usepackage[T1]{fontenc}
\usepackage[utf8]{inputenc}
\usepackage{lmodern}
\usepackage{amsmath,amssymb,amsthm}
\usepackage[margin=25mm]{geometry}
\usepackage{microtype}
\usepackage{needspace}
\usepackage{xcolor}
\usepackage{hyperref}
\hypersetup{
  colorlinks=true,
  linkcolor=blue!50!black,
  citecolor=blue!50!black,
  urlcolor=blue!50!black,
  pdftitle={A d-algebra whose product does not extend to its Dedekind completion},
  pdfauthor={Mohamed Amine Ben Amor}
}
\newcommand{\N}{\mathbb N}
\newcommand{\norm}[1]{\left\lVert #1\right\rVert_\infty}
\newtheorem{theorem}{Theorem}[section]
\newtheorem{lemma}[theorem]{Lemma}
\newtheorem{proposition}[theorem]{Proposition}
\newtheorem{corollary}[theorem]{Corollary}
\theoremstyle{remark}

\numberwithin{equation}{section}
\title{A \texorpdfstring{$d$}{d}-algebra whose product does not extend
to its Dedekind completion}
\author{Mohamed Amine Ben Amor\thanks{%
Research Laboratory of Algebra, Topology, Arithmetic, and Order,
Department of Mathematics, Faculty of Mathematical, Physical and Natural Sciences of Tunis,
Tunis El Manar University, Tunisia.\\
E-mail: \href{mailto:mohamedamine.benamor@ipest.rnu.tn}{mohamedamine.benamor@ipest.rnu.tn}.}}
\date{}

\begin{document}
\maketitle
\begin{abstract}
We construct a Banach lattice \(d\)-algebra with a strong order unit whose
product has no positive associative bilinear extension to its Dedekind
completion. This gives a negative answer to a question of Wickstead and to
the \(d\)-algebra case of a question of Huijsmans. It also disproves an
extension theorem of Chil. The example shows that norm completeness and a
strong order unit do not ensure that multiplication extends to the Dedekind
completion.
\end{abstract}

\noindent Keywords. \(d\)-algebra; Dedekind completion; positive bilinear
map; lattice bimorphism; associativity; Banach lattice algebra.

\smallskip
\noindent Mathematics Subject Classification (2020).
Primary 06F25; Secondary 46A40, 46B42.

\section{Introduction}

Wickstead \cite[Question 4.1, p.~811]{Wickstead2017} asked whether the
product of a Banach lattice algebra can be extended to its Dedekind
completion. We give a negative answer even for \(d\)-algebras with a strong
order unit.

All vector lattices and algebras in this paper are real. A lattice-ordered
algebra is a vector lattice with an associative bilinear product for which
the product of two positive elements is positive. It is a \(d\)-algebra if
multiplication by a positive element is a lattice homomorphism on either
side. This is equivalent to
\[
 |a*b|=|a|*|b| \qquad(a,b\in A).
\]
A \(d\)-algebra need not be commutative or have an identity element. We use
the definitions in Bernau and Huijsmans \cite{BernauHuijsmans1990} and the
vector lattice terminology in Aliprantis and Burkinshaw
\cite{AliprantisBurkinshaw2006}. We write \(a\perp b\) for
\(|a|\wedge|b|=0\).

Our main result is the following.

\begin{theorem}
There exists a Banach lattice \(d\)-algebra with a strong order unit and a
submultiplicative order-unit norm whose product has no positive associative
bilinear extension to its Dedekind completion.
\end{theorem}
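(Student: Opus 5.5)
We will build an explicit example: a sublattice $A$ of $\ell^\infty$ (or of $\ell^\infty(S)$ for a countable set $S$) that contains the constants. Then $\mathbf 1$ is a strong order unit, the supremum norm is an order-unit norm, and $A$ is closed, hence a Banach lattice. On $A$ we put a product of the form $(a*b)(s)=a(\sigma(s))\,b(\tau(s))$ for suitable maps $\sigma,\tau\colon S\to S$, or more generally $a*b = w\cdot (a\circ\sigma)(b\circ\tau)$ with a positive weight $w$ satisfying $\norm{w}\le 1$.

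\textbf{Verifying the $d$-algebra axioms.} Such a product satisfies $|a*b|=|a|*|b|$ pointwise, is positive, and is submultiplicative for $\norm{\cdot}$. Associativity reduces to identities among $\sigma,\tau,w$ that we arrange by construction; for instance $\sigma\circ\sigma=\sigma$, $\tau\circ\tau=\tau$, and $\sigma\circ\tau$, $\tau\circ\sigma$ compatible, in the spirit of left- and right-zero type semigroup products. The key design point is that $A$ is \emph{not} the whole $\ell^\infty(S)$. It is a carefully chosen closed sublattice: closed under $*$, but with the property that its Dedekind completion $A^\delta$ (realised inside $\ell^\infty(S)$, or as an ideal-type completion there) contains elements $u$ for which the natural candidate products $u\circ\sigma$ fail to behave. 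For example, $A$ could consist of functions converging along specified subsequences, with $\sigma,\tau$ interlacing those subsequences.

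\textbf{Showing no extension exists.} Suppose $\ast'$ is a positive associative bilinear extension to $A^\delta$. Since $A$ is order dense in $A^\delta$ and the product on $A$ is a lattice bimorphism, we first prove that any positive bilinear extension is forced on certain elements. For $u\in A^\delta_+$ we write $u=\sup\{a\in A: a\le u\}=\inf\{b\in A:b\ge u\}$. Positivity then gives the two-sided bounds $a*c\le u*'c\le b*c$. When these bounds pin $u*'c$ down, or at least pin down its values on a disjoint band, we obtain explicit information. We apply this to two elements $u,v\in A^\delta$ chosen with sup/inf approximants of different shapes, such as indicators of infinite–coinfinite sets that are not in $A$, and to some $c\in A$. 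From this we compute $(u*'v)*'c$ and $u*'(v*'c)$ only through these forced inequalities, and show that they are incompatible: one is $\ge$ a positive element on some atom or band where the other is $0$.

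\textbf{Main obstacle.} The hardest part is the second step. Positivity alone gives only inequalities, so we need enough lattice-bimorphism-like rigidity to determine the extension on the relevant elements. We would first try to show, via order density, that every positive extension restricted to $A^\delta\times A$ agrees with the Arens-type "inf over upper approximants" formula. The danger is that an extension might be order-discontinuous, which would invalidate this formula. To handle that, we would choose the test elements so that the upper and lower approximant bounds coincide, which squeezes the value exactly. The weights $w$ and the maps $\sigma,\tau$ then have to be tuned so that this squeezing occurs on both sides of the associativity identity and yields contradictory values.
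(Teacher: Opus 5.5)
\textbf{There is a genuine gap: you outline a strategy but never build the example.} You never fix $A$, $\sigma$, $\tau$, $w$ or the test elements. The step you call the main obstacle, finding elements on which positivity squeezes every extension to incompatible values, is the whole content of the theorem.

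\textbf{Why your template cannot work as stated.} Take its most natural reading, $A^\delta=\ell^\infty(S)$ with $A$ order dense in it. Then every point mass $1_{\{s\}}$ lies in $A$. For fixed $s$, both $(a,b,c)\mapsto((a*b)*c)(s)$ and $(a,b,c)\mapsto(a*(b*c))(s)$ are a weight times a product of three point evaluations. Testing associativity on point masses therefore shows that the two forms agree on all of $\ell^\infty(S)^3$. The same formula $x\star y=w\,(x\circ\sigma)(y\circ\tau)$ is then a positive associative extension to $A^\delta$, so there is no counterexample. A counterexample of your shape must therefore evaluate at points $\sigma(s),\tau(s)$ that are not atoms of the completion. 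In other words, the product has to be built from limiting values that general elements of $A^\delta$ do not have. That is the idea your proposal lacks.

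\textbf{How the paper supplies it.} Let $A$ be the bounded matrices on $\N\times\N$ whose rows and columns are eventually constant with a common threshold. Define $(a*b)_{ij}=(\lim_k a_{ik})(\lim_k b_{kj})$. In your language this is $S=(\N\cup\{\infty\})^2$, $\sigma(i,j)=(i,\infty)$, $\tau(i,j)=(\infty,j)$, while $A^\delta=\ell^\infty(\N\times\N)$. The norm closure of $A$ gives the Banach lattice.

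Associativity on $A$ amounts to the interchange $\lim_k\lim_m b_{mk}=\lim_m\lim_k b_{mk}$ for the middle factor. This fails for the strictly upper triangular $0$--$1$ matrix $u$.

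Your squeeze then becomes explicit and needs no order continuity:
\begin{itemize}
\item If $p\in A$ agrees with $u$ on row $n$ and vanishes elsewhere, then $p*e\le u\star e\le e*e=e$. Since $p*e$ is the indicator of row $n$, this gives $u\star e=e$.
\item The same argument with the columns of $v=e-u$ gives $e\star v=e$. Bilinearity then gives $e\star u=0$.
\end{itemize}
One element outside $A$, placed between two copies of $e$, already breaks associativity: $(e\star u)\star e=0\ne e=e\star(u\star e)$.
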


The obstruction is associativity: no positive bilinear extension can be
associative. We first prove this for an algebra of bounded matrices
(Theorem~\ref{thm:noextension}). Its norm completion gives the Banach lattice
example
(Proposition~\ref{prop:banach}).

Huijsmans \cite[Section 7, p.~166]{Huijsmans1991} asked whether the product
of an Archimedean lattice-ordered algebra can be extended to its Dedekind
completion. He also asked the same question for almost \(f\)-algebras and
\(d\)-algebras. His survey was written in the spring of 1990 and published
in 1991. Buskes and van Rooij \cite{BuskesVanRooij2000} gave a positive answer
for almost \(f\)-algebras. Chil \cite[Theorem 6]{Chil2004} claimed a positive
answer for all Archimedean \(d\)-algebras.

Theorem~\ref{thm:noextension} disproves this claim and gives a negative
answer to the \(d\)-algebra part of Huijsmans's question, thirty-six years
after the survey was written.

Jaber \cite{Jaber2020} proved that the Fremlin projective tensor
product of Banach lattice algebras is again a Banach lattice algebra.
Azouzi and Jaber \cite{AzouziJaber2025} proved that a Banach lattice algebra
with a contractive approximate identity has a positive approximate identity.

The product is noncommutative, so the example does not contradict Toumi's
extension result for commutative \(d\)-algebras
\cite[Corollary 3.3]{Toumi2013}. We also show that no positive bilinear
extension in our example can satisfy the extra condition in
\cite[Definition 3.1 and Corollary 3.2]{Toumi2013}.

The construction uses ordinary limits. Toumi
\cite[Example 3.1]{Toumi2013} also uses limits of eventually constant
sequences to give an extension that loses orthosymmetry. Here we rule out
every positive associative extension. For related representation results,
see Boulabiar \cite{Boulabiar2004,Boulabiar2007}.

\section{The example}

\subsection{Repeated rows and columns}

Let \(\N=\{1,2,\ldots\}\), and let
\[
 E=\ell^\infty(\N\times\N)
\]
be the space of all bounded infinite real matrices. The order and lattice
operations are taken entry by entry. Its norm is
\[
 \norm{a}=\sup_{i,j\in\N}|a_{ij}|.
\]
Let \(e\) be the matrix with every entry equal to \(1\).

Let \(A\) be the set of matrices in \(E\) whose rows are all equal from
some row onward, and whose columns are all equal from some column onward.
More precisely, \(a\in A\) if there is an integer \(N\geq1\) such that
\begin{equation}\label{eq:tails}
 a_{ij}=a_{i,N+1}\quad(j>N),
 \qquad
 a_{ij}=a_{N+1,j}\quad(i>N).
\end{equation}
These equalities hold for all values of the other index. The first
\((N+1)\times(N+1)\) block determines the whole matrix: repeat its last row
downward and its last column to the right. The same property holds for every
larger \(N\).

For example, the following matrices satisfy \eqref{eq:tails} with \(N=1\)
and \(N=2\), respectively:
\[
 \left(\begin{array}{c|ccc}
  1&2&2&\cdots\\ \hline
  3&4&4&\cdots\\
  3&4&4&\cdots\\
  \vdots&\vdots&\vdots&\ddots
 \end{array}\right),
 \qquad
 \left(\begin{array}{cc|ccc}
  1&2&3&3&\cdots\\
  4&5&6&6&\cdots\\ \hline
  7&8&9&9&\cdots\\
  7&8&9&9&\cdots\\
  \vdots&\vdots&\vdots&\vdots&\ddots
 \end{array}\right).
\]
The lines mark where the repetition starts. In the calculations below,
we display only the first \((N+1)\times(N+1)\) block. Its last row and
column are always understood to repeat.

\Needspace{9\baselineskip}
\begin{lemma}\label{lem:lattice}
The space \(A\) is an Archimedean vector sublattice of \(E\), with strong
order unit \(e\). Its Dedekind completion is \(E\).
\end{lemma}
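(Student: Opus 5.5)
We first show that \(A\) is a vector sublattice of \(E\). If \(a\) satisfies \eqref{eq:tails} with some \(N\), it also satisfies it with every \(N'\geq N\). Indeed, for \(j>N'\geq N\) we have \(a_{ij}=a_{i,N+1}=a_{i,N'+1}\), and the row condition is handled in the same way. So for \(a,b\in A\) we may take a common \(N\) that works for both. The conditions in \eqref{eq:tails} are equalities between entries, and they are preserved by any operation applied entry by entry. Hence \(\lambda a+\mu b\) and \(a\vee b\) (computed entrywise) again satisfy \eqref{eq:tails} with the same \(N\). Thus \(A\) is a linear subspace closed under the lattice operations of \(E\). Archimedeanity is inherited from \(E\), since \(E=\ell^\infty(\N\times\N)\) is Archimedean. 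The matrix \(e\) lies in \(A\), with any \(N\). Every \(a\in A\) is bounded, so \(|a|\leq\norm{a}\,e\), and therefore \(e\) is a strong order unit.

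For the Dedekind completion, we use the standard characterization (Aliprantis and Burkinshaw \cite{AliprantisBurkinshaw2006}). A Dedekind complete vector lattice \(F\) containing \(A\) as a vector sublattice is the Dedekind completion of \(A\) if \(A\) is order dense in \(F\) and every element of \(F\) is the supremum of the elements of \(A\) below it. Equivalently, it suffices that \(A\) is order dense and majorizing in \(F\). The space \(E\) is Dedekind complete, being an \(\ell^\infty\) space. \(A\) is majorizing because \(|x|\leq\norm{x}\,e\) for all \(x\in E\).

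The key step is order density. Given \(0<x\in E\), pick an entry \((p,q)\) with \(x_{pq}>0\). Let \(a=x_{pq}\,\delta_{pq}\) be the matrix unit scaled by \(x_{pq}\). Then \(a\in A\): take \(N=\max(p,q)\), so that row \(N+1\) and column \(N+1\) are zero and all later rows and columns are zero as well. Moreover \(0<a\leq x\). Hence \(A\) is order dense.

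It remains to show that each \(x\in E^+\) is the supremum in \(E\) of \(\{a\in A:0\leq a\leq x\}\). The finitely supported truncations \(x\mathbf 1_{[1,n]^2}\) lie in \(A\), and their supremum is \(x\), because suprema in \(E\) are computed entrywise. Combining this with majorization, Dedekind completeness of \(E\), and the uniqueness of the Dedekind completion up to lattice isomorphism fixing \(A\), we conclude that \(E\) is the Dedekind completion of \(A\).

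The only point needing care is the invocation of the uniqueness/characterization theorem: one must check both order density (every positive element dominates a nonzero element of \(A\)) and that \(A\) is majorizing. Both are immediate here because finitely supported matrices belong to \(A\) and \(e\in A\).
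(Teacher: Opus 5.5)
Your proof is correct and takes essentially the same route as the paper. A common \(N\) gives closure under the entrywise operations, and \(|x|\leq\norm{x}e\) gives both the strong order unit and majorization. Scaled matrix units give order density, and order density plus majorization in the Dedekind complete space \(E\) identifies \(E\) with \(A^\delta\). Your extra step with the finitely supported truncations is valid but redundant, since you had already invoked the order-dense-plus-majorizing characterization.
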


\begin{proof}
If \(a,b\in A\), choose one \(N\) that works for both. Their sum, scalar
multiples, maximum and minimum all have the same repetition property.
Thus \(A\) is a vector sublattice of \(E\). It is Archimedean because the
order is entrywise. Also, \(e\in A\) and \(|a|\leq\norm{a}e\), so \(e\)
is a strong order unit.

The space \(E\) is Dedekind complete. Indeed, take any nonempty set of
matrices with an upper bound in \(E\), and take the supremum at each entry.
This gives a bounded matrix: it lies between one matrix in the set and
the given upper bound.

The space \(A\) is majorizing in \(E\), since \(|x|\leq\norm{x}e\) for
every \(x\in E\). It is also order dense. To see this, take \(0<x\in E\)
and choose a positive entry \(x_{ij}\). The matrix that has this value at
\((i,j)\) and is zero elsewhere belongs to \(A\), is nonzero, and lies
below \(x\). Thus \(A\) is order dense and majorizing in the Dedekind
complete space \(E\). These two properties identify \(E\) with \(A^\delta\).
\end{proof}

\subsection{The product}

Every row and every column of a matrix in \(A\) is eventually constant.
Define
\begin{equation}\label{eq:product}
 (a*b)_{ij}
 =
 \left(\lim_{k\to\infty}a_{ik}\right)
 \left(\lim_{k\to\infty}b_{kj}\right).
\end{equation}
In words, take the limit along row \(i\) of \(a\), take the limit along
column \(j\) of \(b\), and multiply the two numbers.

If the same \(N\) works for \(a\) and \(b\), this simply says
\[
 (a*b)_{ij}=a_{i,N+1}\,b_{N+1,j}.
\]
So the finite block of the product is obtained by multiplying the last
column of the block of \(a\) by the last row of the block of \(b\).

\begin{lemma}\label{lem:algebra}
The product \eqref{eq:product} makes \(A\) an Archimedean \(d\)-algebra,
and \(e*e=e\).
\end{lemma}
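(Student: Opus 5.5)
We need to check four things: that \(a*b\) lies in \(A\), that \(*\) is bilinear and associative, that it is positive with \(|a*b|=|a|*|b|\), and that \(e*e=e\). Archimedean is already given by Lemma~\ref{lem:lattice}.

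Write \(r(a)_i=\lim_k a_{ik}\) for the limit along row \(i\) and \(c(b)_j=\lim_k b_{kj}\) for the limit along column \(j\). Then \((a*b)_{ij}=r(a)_i\,c(b)_j\), and both \(r\) and \(c\) are linear in their argument.

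\textbf{Closure.} Choose one \(N\) valid for \(a\) and \(b\) in \eqref{eq:tails}. For \(i>N\) row \(i\) of \(a\) equals row \(N+1\), so \(r(a)_i=r(a)_{N+1}\). Likewise \(c(b)_j=c(b)_{N+1}\) for \(j>N\). Hence \(a*b\) satisfies \eqref{eq:tails} with the same \(N\) and lies in \(A\). It is bounded, since \(|r(a)_i|\le\norm a\) and \(|c(b)_j|\le\norm b\).

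\textbf{Bilinearity.} This follows from linearity of limits.

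\textbf{Associativity.} This is the one step needing an actual computation. Fix \(i\), \(j\). Then
\[
((a*b)*c)_{ij}=r(a*b)_i\,c(c)_j=\lim_k r(a)_i\,c(b)_k\,c(c)_j=r(a)_i\,\lambda(b)\,c(c)_j,
\]
where \(\lambda(b)=\lim_k c(b)_k\). This exists because \(c(b)\) is eventually constant, and it equals \(b_{N+1,N+1}\). Symmetrically,
\[
(a*(b*c))_{ij}=r(a)_i\,c(b*c)_j=r(a)_i\,\lim_k r(b)_k\,c(c)_j=r(a)_i\,\mu(b)\,c(c)_j,
\]
with \(\mu(b)=\lim_k r(b)_k\). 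So the key point is \(\lambda(b)=\mu(b)\). For \(k>N\) we have \(c(b)_k=b_{N+1,k}=b_{N+1,N+1}\), and in the same way \(r(b)_k=b_{k,N+1}=b_{N+1,N+1}\). Both limits therefore equal the corner entry \(b_{N+1,N+1}\), and associativity holds.

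\textbf{Positivity and the \(d\)-property.} Limits preserve order and absolute values: \(r(|a|)_i=\lim_k|a_{ik}|=|r(a)_i|\) and \(c(|b|)_j=|c(b)_j|\). Hence
\[
(|a|*|b|)_{ij}=|r(a)_i|\,|c(b)_j|=|(a*b)_{ij}|,
\]
which is the identity \(|a*b|=|a|*|b|\). Taking \(a,b\ge0\) gives positivity, so \(A\) is a lattice-ordered algebra and, by the stated equivalence, a \(d\)-algebra.

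\textbf{Unit computation.} Finally, \((e*e)_{ij}=1\cdot1=1\), so \(e*e=e\).
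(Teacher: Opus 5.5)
Your proof is correct and follows the paper's argument. It uses a common repetition index for closure, reduces both triple products to \(a_{i,N+1}\,b_{N+1,N+1}\,c_{N+1,j}\), which you phrase as the two iterated corner limits of \(b\) agreeing, and gets the \(d\)-property from the fact that absolute values commute with row and column limits.
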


\begin{proof}
Choose one \(N\) that works for \(a,b\in A\). The formula
\((a*b)_{ij}=a_{i,N+1}b_{N+1,j}\) shows that \(a*b\) has the same repetition
property. Hence \(a*b\in A\). The product is bilinear because limits are
linear. It is positive because the limits of positive rows and columns
are nonnegative.

For associativity, choose one \(N\) that works for \(a,b,c\in A\). Applying
the product rule twice gives
\[
 ((a*b)*c)_{ij}
 =a_{i,N+1}\,b_{N+1,N+1}\,c_{N+1,j}
 =(a*(b*c))_{ij}.
\]
This holds at every entry, so the product is associative.

Finally, absolute values commute with the row and column limits. Therefore
\[
 |a*b|=|a|*|b|.
\]
For a positive fixed factor, multiplication is thus a linear map that
preserves absolute values, hence a lattice homomorphism. This proves the
\(d\)-algebra property. The formula also gives \(e*e=e\).
\end{proof}

\Needspace{27\baselineskip}
\paragraph{A calculation with \(N=1\).}
Take three matrices whose \(2\times2\) blocks are
\[
 \begin{array}{c@{\qquad}c@{\qquad}c}
 a&b&c\\[3pt]
 \begin{pmatrix}1&2\\3&4\end{pmatrix}&
 \begin{pmatrix}5&6\\7&8\end{pmatrix}&
 \begin{pmatrix}9&10\\11&12\end{pmatrix}.
 \end{array}
\]
Remember that the last row and column repeat. The block of \(a*b\) is
\[
 \begin{pmatrix}2\\4\end{pmatrix}\begin{pmatrix}7&8\end{pmatrix}
 =\begin{pmatrix}14&16\\28&32\end{pmatrix}.
\]
The block of \(b*c\) is
\[
 \begin{pmatrix}6\\8\end{pmatrix}\begin{pmatrix}11&12\end{pmatrix}
 =\begin{pmatrix}66&72\\88&96\end{pmatrix}.
\]
For the two triple products, we get
\[
 \begin{aligned}
 (a*b)*c:\quad&
 \begin{pmatrix}16\\32\end{pmatrix}\begin{pmatrix}11&12\end{pmatrix}
 =\begin{pmatrix}176&192\\352&384\end{pmatrix},\\[3pt]
 a*(b*c):\quad&
 \begin{pmatrix}2\\4\end{pmatrix}\begin{pmatrix}88&96\end{pmatrix}
 =\begin{pmatrix}176&192\\352&384\end{pmatrix}.
 \end{aligned}
\]
The blocks are equal, as the general proof predicts.

\paragraph{A calculation with \(N=2\).}
Take matrices with the following \(3\times3\) blocks:
\[
 \begin{array}{c@{\qquad}c}
 a&b\\[3pt]
 \begin{pmatrix}1&2&3\\4&5&6\\7&8&9\end{pmatrix}&
 \begin{pmatrix}1&0&2\\3&4&5\\6&7&8\end{pmatrix}.
 \end{array}
\]
The block of \(a*b\) is
\[
 \begin{pmatrix}3\\6\\9\end{pmatrix}\begin{pmatrix}6&7&8\end{pmatrix}
 =\begin{pmatrix}18&21&24\\36&42&48\\54&63&72\end{pmatrix}.
\]
For example, \((a*b)_{12}=3\cdot7=21\). Since the last row and column
repeat, we also have \((a*b)_{41}=9\cdot6=54\) and
\((a*b)_{10,20}=9\cdot8=72\).

Although \(e\) is an order unit, it is not an identity element. Let \(h\)
have its first row equal to \(1\) and all other rows equal to \(0\). Then
\[
 h*e=h,\qquad e*h=0.
\]
In particular, the product is noncommutative.

\subsection{Why no associative extension exists}

Consider the two matrices
\[
 u=\begin{pmatrix}
  0&1&1&1&\cdots\\
  0&0&1&1&\cdots\\
  0&0&0&1&\cdots\\
  0&0&0&0&\cdots\\
  \vdots&\vdots&\vdots&\vdots&\ddots
 \end{pmatrix},
 \qquad
 v=\begin{pmatrix}
  1&0&0&0&\cdots\\
  1&1&0&0&\cdots\\
  1&1&1&0&\cdots\\
  1&1&1&1&\cdots\\
  \vdots&\vdots&\vdots&\vdots&\ddots
 \end{pmatrix}.
\]
Thus \(u_{ij}=1\) when \(i<j\), and \(u_{ij}=0\) otherwise; also \(v=e-u\).
They satisfy \(0\leq u,v\leq e\) and \(u\perp v\). They belong to \(E\)
but not to \(A\): their rows and columns never start repeating.

\Needspace{10\baselineskip}
\begin{theorem}\label{thm:noextension}
The product of \(A\) has no positive associative bilinear extension to
\(E=A^\delta\).
\end{theorem}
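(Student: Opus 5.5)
The plan is to suppose that $\star$ is a positive bilinear extension of $*$ to $E$ and to use only positivity to compute four products exactly: $u\star e$, $e\star u$, $v\star e$ and $e\star v$. Associativity will then fail on the triple $e,u,e$. Positivity of a bilinear map makes $\star$ monotone in each variable on positive elements. So each product involving $u$ or $v$ can be squeezed between products of elements of $A$ that lie below and above $u$ or $v$. Those products are computed with \eqref{eq:product}.

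\textbf{The approximants.} For $n\in\N$, let $g_n\in A$ have entries $1$ at positions $(i,j)$ with $i\le n<j$ and $0$ elsewhere. Let $h_n\in A$ have entries $1$ at positions with $j\le n<i$ and $0$ elsewhere. Both satisfy \eqref{eq:tails} with $N=n$. Since $u_{ij}=1$ exactly when $i<j$, we have
\[
 g_n\le u\le e-h_n,\qquad h_n\le v\le e-g_n .
\]
Reading off row and column limits gives the following:
\begin{itemize}
\item $g_n*e$ has rows $1,\dots,n$ equal to $1$ and all other rows $0$, since row $i$ of $g_n$ tends to $1$ exactly when $i\le n$;
\item $e*h_n$ has columns $1,\dots,n$ equal to $1$ and all other columns $0$;
\item $(e-g_n)*e$ has rows $i>n$ equal to $1$ and rows $i\le n$ equal to $0$;
\item $e*(e-h_n)$ has columns $j>n$ equal to $1$ and columns $j\le n$ equal to $0$.
\end{itemize}

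\textbf{Squeezing.} By monotonicity and $e*e=e$ (Lemma~\ref{lem:algebra}), we get $g_n*e\le u\star e\le e\star e=e$ for every $n$. The entrywise supremum of the $g_n*e$ is $e$, so $u\star e=e$. In the same way, $v\star e\le (e-g_n)*e$ for every $n$, and these upper bounds have infimum $0$ entrywise. Hence $v\star e=0$. Symmetrically, $e\star v=e$ and $e\star u=0$. One could also obtain $v\star e=0$ from $u\star e=e$ and linearity, using $(u+v)\star e=e$ and $v\star e\ge0$; this gives a consistency check.

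\textbf{The contradiction.} If $\star$ were associative, we would have
\[
 e=e\star e=e\star(u\star e)=(e\star u)\star e=0\star e=0 ,
\]
which is absurd. The step that needs the most care is the squeezing. The extension is not assumed to be order continuous, so we cannot pass limits through $\star$. The argument must therefore use only monotonicity, together with the fact that suprema and infima in $E$ are computed entrywise (Lemma~\ref{lem:lattice}). The remaining work is the bookkeeping of the row and column limits of $g_n$, $h_n$, $e-g_n$ and $e-h_n$, and checking that each of them lies in $A$.
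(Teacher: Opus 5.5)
Your proof is correct and follows essentially the paper's route: you squeeze $u\star e$ and $e\star v$ between products of elements of $A$ using only positivity, obtain $u\star e=e$ and $e\star u=0$, and compare $e\star(u\star e)$ with $(e\star u)\star e$. The differences are cosmetic. The paper uses single-row and single-column test matrices below $u$ and $v$ instead of your cumulative $g_n$ and $h_n$, and it gets $e\star u=0$ from $e\star v=e$ by linearity, which you also note as an alternative to your upper-bound squeeze.
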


\begin{proof}
Let \(\star\) be any positive bilinear extension of \(*\) to \(E\).
We will first use positivity to prove that
\begin{equation}\label{eq:forced}
 u\star e=e,\qquad e\star v=e,\qquad e\star u=0.
\end{equation}
We will then show that these equalities rule out associativity.

\emph{First, test one row at a time.}
Fix \(n\in\N\). Let \(p\) be the matrix that is \(1\) in row \(n\) to the
right of the diagonal and is \(0\) elsewhere. This matrix belongs to \(A\)
with \(N=n\), and \(0\leq p\leq u\). The product \(p*e\) is the matrix
that is \(1\) on the whole of row \(n\) and is \(0\) elsewhere.

For example, when \(n=2\), the first \(3\times3\) blocks are
\[
 \begin{array}{c@{\qquad}c}
 p&p*e\\[3pt]
 \begin{pmatrix}0&0&0\\0&0&1\\0&0&0\end{pmatrix}&
 \begin{pmatrix}0&0&0\\1&1&1\\0&0&0\end{pmatrix}.
 \end{array}
\]
Positivity gives
\[
 p*e=p\star e\leq u\star e\leq e\star e=e.
\]
Hence every entry of row \(n\) in \(u\star e\) equals \(1\).
Since \(n\) was arbitrary, \(u\star e=e\).

\emph{Next, test one column at a time.}
Fix \(n\in\N\). Let \(q\) be the matrix that is \(1\) in column \(n\) on
and below the diagonal and is \(0\) elsewhere. It belongs to \(A\) with
\(N=n\), and \(0\leq q\leq v\). The product \(e*q\) is \(1\) on the whole
of column \(n\) and is \(0\) elsewhere.

For \(n=2\), the first \(3\times3\) blocks are
\[
 \begin{array}{c@{\qquad}c}
 q&e*q\\[3pt]
 \begin{pmatrix}0&0&0\\0&1&0\\0&1&0\end{pmatrix}&
 \begin{pmatrix}0&1&0\\0&1&0\\0&1&0\end{pmatrix}.
 \end{array}
\]
Again,
\[
 e*q=e\star q\leq e\star v\leq e\star e=e.
\]
Every entry of column \(n\) in \(e\star v\) is therefore \(1\). This holds
for every \(n\), so \(e\star v=e\). Since \(u=e-v\), bilinearity gives
\[
 e\star u=e\star(e-v)=e-e=0.
\]
This proves all three equalities in \eqref{eq:forced}.

\emph{Finally, compare the two triple products.}
We now have
\[
 (e\star u)\star e=0,\qquad
 e\star(u\star e)=e.
\]
Since \(e\neq0\), the extension is not associative. This argument applies
to every positive bilinear extension.
\end{proof}

\section{Further results}

\subsection{The condition used by Toumi}

On \(A\), multiplying first on the left by \(e\) and then on the right gives
the same result as doing these operations in the reverse order. Both maps
are positive projections. The row and column tests above show that their
positive linear extensions to \(E\) cannot commute: one order sends \(u\)
to \(0\), and the other sends \(u\) to \(e\).

We now consider the extra condition used by Toumi
\cite[Definition 3.1 and Corollary 3.2]{Toumi2013}.
A positive bilinear product \(\star\) on \(E\) is called
\emph{two-power orthosymmetric} if
\[
 x\perp y
 \quad\Longrightarrow\quad
 (a\star x)\star(y\star b)=0
 \qquad(a,b,x,y\in E_+).
\]
This condition concerns a bilinear product before associativity is known.

\Needspace{10\baselineskip}
\begin{corollary}\label{cor:twopower}
No positive bilinear extension of the product of \(A\) to \(E\) is
two-power orthosymmetric.
\end{corollary}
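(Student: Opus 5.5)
The plan is to reuse the equalities \eqref{eq:forced} from the proof of Theorem~\ref{thm:noextension}. The key observation is that their derivation never used associativity: it used only that \(\star\) is positive, bilinear, and agrees with \(*\) on \(A\). So for an arbitrary positive bilinear extension \(\star\) of \(*\) to \(E\), I would first record that the row test and the column test still give
\[
 u\star e=e,\qquad e\star v=e .
\]
I would also note that \(e\star e=e*e=e\), since \(e\in A\) and Lemma~\ref{lem:algebra} gives \(e*e=e\).

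Next I would plug a suitable quadruple into the two-power orthosymmetry condition. Take \(a=e\), \(x=v\), \(y=u\) and \(b=e\). All four are positive, and \(x=v\perp u=y\), as noted when \(u\) and \(v\) were introduced. Then
\[
 (a\star x)\star(y\star b)=(e\star v)\star(u\star e)=e\star e=e\neq0 .
\]
This violates two-power orthosymmetry. Since \(\star\) was an arbitrary positive bilinear extension, the corollary follows.

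The only step that needs care is the first one: checking that the proof of \eqref{eq:forced} depends only on positivity and bilinearity. I would reread that proof to confirm this. The inequalities \(p*e\le u\star e\le e\star e\) and \(e*q\le e\star v\le e\star e\) come from monotonicity of \(\star\) in each variable for positive arguments, and the entrywise comparisons are purely order-theoretic. The last equality \(e\star u=0\) uses bilinearity, but we do not even need it here. Associativity enters nowhere, so the argument applies to every positive bilinear extension.
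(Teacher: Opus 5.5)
Your proposal is correct and follows the paper's proof: it uses the associativity-free equalities \(e\star v=e\) and \(u\star e=e\) from the row and column tests, together with \(v\perp u\), to get \((e\star v)\star(u\star e)=e\star e=e\neq0\). Your check that the derivation of these two equalities uses only positivity, bilinearity and agreement with \(*\) on \(A\) is exactly the observation the paper relies on.
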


\begin{proof}
The proof above gives \(e\star v=e\) and \(u\star e=e\). Since \(v\perp u\),
the stated condition would require
\[
 (e\star v)\star(u\star e)=0.
\]
But its left-hand side is \(e\star e=e\neq0\).
\end{proof}

\Needspace{15\baselineskip}
\subsection{A Banach lattice example}

\noindent The next proposition gives a negative answer to Wickstead's open
Question~4.1 \cite[p.~811]{Wickstead2017}. He asks whether the product of
a Banach lattice algebra can always be extended to its Dedekind completion.
Our answer is negative even for Banach lattice \(d\)-algebras with a strong
order unit.

\begin{proposition}\label{prop:banach}
Let \(B\) be the closure of \(A\) in \(E\) for the supremum norm. The product
of \(A\) has a unique continuous bilinear extension to \(B\). With this
product, \(B\) is a Banach lattice \(d\)-algebra with strong order unit
\(e\), and its norm is submultiplicative. It is also relatively uniformly
complete. Its Dedekind completion is \(E\), but its product has no positive
associative bilinear extension to \(E\).
\end{proposition}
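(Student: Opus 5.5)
We first identify \(B\) concretely and then reduce the non-extension claim to Theorem~\ref{thm:noextension}.

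\emph{Step 1: describe \(B\).} We claim that \(B\) is the set of \(x\in E\) whose rows converge uniformly and whose columns converge uniformly. Precisely, \(x\in B\) if and only if
\[
\sup_i|x_{ij}-x_{ik}|\to0 \quad\text{and}\quad \sup_j|x_{ij}-x_{kj}|\to0 \qquad (j,k\to\infty).
\]
Elements of \(A\) have this property, and it survives uniform limits. We will only need that every \(x\in B\) has row limits \(r_i(x)=\lim_k x_{ik}\) and column limits \(c_j(x)=\lim_k x_{kj}\), with \(|r_i(x)|,|c_j(x)|\le\norm{x}\). We also need that \(r_i\) and \(c_j\) are linear lattice homomorphisms \(B\to\mathbb R\), because they are pointwise limits of the lattice homomorphisms \(x\mapsto x_{ik}\). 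Since \(B\) is the closure of a vector sublattice, it is a closed vector sublattice of the Banach lattice \(E\). Hence \(B\) is a Banach lattice, and in particular it is relatively uniformly complete.

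\emph{Step 2: the product.} On \(A\) we have \(\norm{a*b}=\sup_{i,j}|r_i(a)c_j(b)|\le\norm a\norm b\). So \(*\) is a bounded bilinear map, and it has a unique continuous bilinear extension to \(B\), given by the same formula \((x*y)_{ij}=r_i(x)c_j(y)\). The right-hand side is continuous and agrees with \(*\) on \(A\). The identities defining an associative lattice-ordered algebra pass to the closure by continuity, and so does the \(d\)-algebra identity \(|x*y|=|x|*|y|\). We can also check these directly from the formula, since \(r_i\) and \(c_j\) are lattice homomorphisms. In particular \(r_i(x*y)=r_i(x)\lim_k c_k(y)\), and this limit exists because \(y\in B\) has uniformly convergent rows. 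The norm estimate above shows that the norm is submultiplicative. Since \(e\in A\subseteq B\) and \(|x|\le\norm{x}e\), the element \(e\) is a strong order unit, and \(\norm{\cdot}\) is its order-unit norm.

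\emph{Step 3: Dedekind completion and non-extension.} We have \(A\subseteq B\subseteq E\). By Lemma~\ref{lem:lattice}, \(A\) is order dense and majorizing in the Dedekind complete space \(E\), so \(B\) is too, and hence \(B^\delta=E\). Now let \(\star\) be a positive associative bilinear extension of the product of \(B\) to \(E\). Its restriction to \(A\) is the product of \(A\), so \(\star\) is also a positive associative bilinear extension of the product of \(A\). This contradicts Theorem~\ref{thm:noextension}. The only point that needs care is that the extended product on \(B\) restricts to \(*\) on \(A\), and this holds by construction.

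The main obstacle is mild: we must confirm that the continuous extension is still associative and still a \(d\)-algebra product. We handle this with the limit-functional formula in Step 2, or alternatively by density and norm continuity.
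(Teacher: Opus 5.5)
Your proposal is correct and follows essentially the paper's route. You treat $B$ as a closed sublattice of $E$ whose sup norm is the order-unit norm of $e$, and you extend the bounded product by density so that associativity, positivity, submultiplicativity and $|x*y|=|x|*|y|$ pass to the limit. You then get $B^\delta=E$ from $A\subseteq B\subseteq E$, and you restrict any positive associative extension to $A$ to contradict Theorem~\ref{thm:noextension}.

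The differences are minor. You write the extended product explicitly as $(x*y)_{ij}=r_i(x)c_j(y)$ using the row and column limit lattice homomorphisms, instead of citing the completion theorems. You obtain relative uniform completeness from the general fact that Banach lattices are uniformly complete, rather than from the order-unit norm. One small slip: in your second Cauchy condition the limit should be taken as $i,k\to\infty$, not $j,k\to\infty$.
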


\begin{proof}
The supremum norm on \(A\) is exactly the order-unit norm defined by \(e\):
\begin{equation}\label{eq:unitnorm}
 \norm{a}=\inf\{t\geq0:|a|\leq t e\}\qquad(a\in A).
\end{equation}
For the general definition of this lattice norm, see
Fremlin \cite[354F--354G]{Fremlin2002}.
By the completion theorem for normed vector lattices
\cite[Theorem 4.2, p.~182]{AliprantisBurkinshaw2006}, the norm completion
\(B\) is a Banach lattice. We identify it with the norm closure of \(A\)
in \(E\). Its order and norm are the ones inherited from \(E\), and
\eqref{eq:unitnorm} holds for every \(a\in B\) as well. In particular,
\(e\) remains a strong order unit.

The norm is submultiplicative on \(A\). Indeed, positivity and \(e*e=e\)
give
\[
 |a*b|=|a|*|b|
 \leq\norm{a}\norm{b}(e*e)
 =\norm{a}\norm{b}e,
\]
so
\begin{equation}\label{eq:norm}
 \norm{a*b}\leq\norm{a}\norm{b}\qquad(a,b\in A).
\end{equation}
Thus \(A\) is a normed lattice algebra. The standard completion result
for these algebras \cite[Section 3, p.~807]{Wickstead2017} gives a continuous
positive associative bilinear product on \(B\) extending the product of
\(A\), with the same norm bound. This extension is unique by continuity
and the norm density of \(A\) in \(B\).

It remains to check the \(d\)-algebra identity. Let \(x,y\in B\), and
choose \(a_n,b_n\in A\) with \(a_n\to x\) and \(b_n\to y\) in norm.
The product and the lattice operations are norm continuous; for the latter,
see \cite[pp.~181--182]{AliprantisBurkinshaw2006}. Taking limits in the
identity on \(A\) therefore gives
\[
 |x*y|
 =\lim_{n\to\infty}|a_n*b_n|
 =\lim_{n\to\infty}\bigl(|a_n|*|b_n|\bigr)
 =|x|*|y|.
\]
Hence \(B\) is a Banach lattice \(d\)-algebra.

Relative uniform completeness also follows from the order-unit norm.
Indeed, \(\norm{x-y}\leq\varepsilon\) is equivalent to
\(|x-y|\leq\varepsilon e\), and every positive regulator in \(B\) is
bounded by a multiple of \(e\). Thus relative uniform convergence agrees
with norm convergence, and the two Cauchy conditions agree as well.
Since \(B\) is norm complete, it is relatively uniformly complete.

Finally, \(A\subset B\subset E\). Since \(A\) is order dense and majorizing
in \(E\), the same is true of \(B\). Therefore \(B^\delta=E\).
Any positive associative bilinear extension of the product of \(B\) to
\(E\) would also extend the product of \(A\), contrary to
Theorem~\ref{thm:noextension}.
\end{proof}

\section*{Acknowledgements}

The author thanks Professor Jamel Jaber for pointing out Elmiloud Chil's
paper and Wickstead's problem, and Professor Youssef Azouzi for checking
the counterexample.

\par\bigskip
\Needspace{30\baselineskip}

\end{document}